\newcommand\blfootnote[1]{%
  \begingroup
  \renewcommand\thefootnote{}\footnote{#1}%
  \addtocounter{footnote}{-1}%
  \endgroup
}
\newtheorem{theo}{Theorem}
\newtheorem{lema}[theo]{Lemma}
\newtheorem{propo}[theo]{Proposition}
\theoremstyle{definition}
\newtheorem{example}{Example}
\DeclareMathOperator{\tr}{tr}
\DeclareMathOperator{\spec}{sp}
\newcommand{\qqed}{\hfill$\Box$\medskip}
\def\vec0{\mbox{\boldmath $0$}}
\def\A{\mbox{\boldmath $A$}}
\def\J{\mbox{\boldmath $J$}}
\def\M{\mbox{\boldmath $M$}}
\def\N{\mbox{\boldmath $N$}}
\def\S{\mbox{\boldmath $S$}}
\def\G{\Gamma}
\def\Re{\mathbb R}
\begin{document}

\title{The spectral excess theorem for graphs with few eigenvalues whose distance-$2$ or distance-$1$-or-$2$ graph is strongly regular
\thanks{Research of C. Dalf\'{o} and M. A. Fiol is partially supported by AGAUR under project 2017SGR1087. Research of J. Koolen is partially supported by the {\em National Natural Science Foundation of China} under project No. 11471009,  and the {\em Chinese Academy of Sciences} under its `100 talent' program.}}

\author{C. Dalf\'o$^a$, M.A. Fiol$^{a,b}$, J. Koolen$^c$
\\ \\
{\small $^a$Universitat Polit\`ecnica de Catalunya, BarcelonaTech} \\
{\small Dept. de Matem\`atiques, Barcelona, Catalonia}\\
{\small $^b$ Barcelona Graduate School of Mathematics} \\
{\small {\tt\{cristina.dalfo,miguel.angel.fiol\}@upc.edu}} \\
{\small $^c$University of Science and Technology of China} \\
{\small School of Mathematical Sciences} \\
{\small  Hefei, Anhui, China}\\
{\small {\tt koolen@ustc.edu.cn}}
 }
\date{}

\maketitle

\blfootnote{
\begin{minipage}[l]{0.3\textwidth} \includegraphics[trim=10cm 6cm 10cm 5cm,clip,scale=0.15]{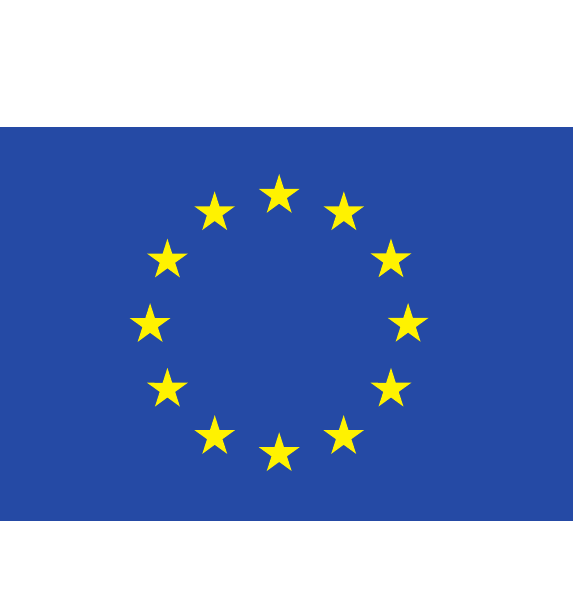} \end{minipage}  \hspace{-2cm} \begin{minipage}[l][1cm]{0.79\textwidth}
   The research of C. Dalf\'{o} has also received funding from the European Union's Horizon 2020 research and innovation programme under the Marie Sk\l{}odowska-Curie grant agreement No 734922.
  \end{minipage}}

\begin{abstract}
We study regular graphs
whose distance-$2$ graph or distance-$1$-or-$2$ graph is strongly regular. We provide a characterization of such graphs $\G$ (among regular graphs with
few distinct eigenvalues) in terms of the spectrum and the mean number of vertices at maximal distance $d$ from every vertex,
where $d+1$ is the number of different eigenvalues of $\G$.
This can be seen as another version of the so-called spectral excess theorem, which characterizes in a similar way those regular graphs that are distance-regular.
\end{abstract}

\noindent{\em Keywords:} Distance-regular graph; distance-$2$ graph; spectrum; predistance polynomials.

\noindent{\em AMS subject classifications:} 05C50, 05E30.

\section{Preliminaries}

Let $\G$ be a distance-regular graph with adjacency matrix $\A$ and $d+1$ distinct eigenvalues.
The distance-$i$ graph (associated with $\G$) is the graph $\G_i$ having the same vertices as
$\G$ and in which two vertices are adjacent if and only if they are at distance $i$ in $\G$. Similarly,
the distance-$i$-or-$j$ graph is the graph $\G_{i,j}$ with the same vertices as
$\G$ and in which two vertices are adjacent if and only if they are at distance $i$ or $j$ in $\G$.
In the recent works of Brouwer and Fiol  \cite{bf14,f16}, it was studied the situation in which the distance-$d$ graph $\G_d$ of $\G$ (or the Kneser graph $K$ of $\G$) with adjacency matrix $\A_d(=p_d(\A))$, where $p_d$ is the distance-$d$ polynomial, has fewer distinct eigenvalues than $\G$. Examples are the so-called half antipodal ($K$ with only one negative eigenvalue, up to multiplicity), and antipodal distance-regular graphs (where $K$ consists of disjoint copies of a complete graph).

Here we study the cases in which $\G$ has few eigenvalues and its distance-$2$ graph $\Gamma_2$ or its distance-$1$-or-$2$ graph $\Gamma_{1,2}$ are strongly regular.
The main result of this paper is a characterization of such (partially) distance-regular graphs, among regular graphs with $d\in \{3,4\}$ distinct nontrivial eigenvalues, in terms of the spectrum and the mean number of vertices at maximal distance $d$ from every vertex. This can be seen as another version of the so-called spectral excess theorem. Other related characterizations of some of these cases were given by Fiol in \cite{f97,f00,f01}.
For background on distance-regular graphs and strongly regular graphs, we refer the reader to Brouwer, Cohen, and Neumaier \cite{bcn89}, Brouwer and Haemers \cite{bh12}, and Van Dam, Koolen and Tanaka \cite{vdkt16}.

Let $\G$ be a regular (connected) graph with degree $k$, $n$ vertices, and spectrum $\spec \G=\{\theta_0^{m_0},\theta_1^{m_1},\ldots,\theta_d^{m_d}\}$, where $\theta_0(=k)>\theta_1>\cdots>\theta_d$, and $m_0=1$.
In this work, we use the following scalar product on the $(d+1)$-dimensional vector space of real polynomials modulo $m(x)=\prod_{i=0}^d (x-\theta_i)$, that is, the minimal polynomial of $\A$.
\begin{equation}
\label{prod}
\langle p,q\rangle_{\G}=\frac{1}{n}\tr (p(\A)q(\A))= \frac{1}{n}\sum_{i=0}^d m_i p(\theta_i)q(\theta_i), \qquad p,q\in \Re_{d}[x]/(m(x)).
\end{equation}
This is a special case of the inner product of symmetric $n\times n$ real matrices $\M$ and $\N$, defined by $\langle \M,\N\rangle=\frac{1}{n}\tr(\M\N)$.
The {\it predistance polynomials} $p_0,p_1,\ldots, p_d$, introduced by Fiol and Garriga \cite{fg97}, are a sequence of orthogonal polynomials with respect to the  inner product \eqref{prod}, normalized in such a way that $\|p_i\|^2_{\G}=p_i(k)$ (this makes sense since it is known that $p_i(k)>0$ for any $i=0,\ldots,d$, see for instance Szeg\"o \cite{s75}).
As every sequence of orthogonal polynomials, the predistance
polynomials satisfy a three-term recurrence of the form
\begin{equation*}
\label{recur-pol}
xp_i=\beta_{i-1}p_{i-1}+\alpha_i p_i+\gamma_{i+1}p_{i+1}\qquad i=0,1,\dots,d,
\end{equation*}
where the constants $\beta_{i-1}$, $\alpha_i$, and
$\gamma_{i+1}$ are called \emph{preintersection numbers} and are the Fourier coefficients of $xp_i$ in terms
of $p_{i-1}$, $p_i$, and $p_{i+1}$, respectively (and
$\beta_{-1}=\gamma_{d+1}=0$), beginning with $p_0=1$ and
$p_1=x$.

Some basic properties of the predistance polynomials and preintersection numbers are included in the following result (see C\'amara, F\`abrega, Fiol, and Garriga \cite{cffg09}, and Diego, F\`abrega, and Fiol \cite{dff16}).

\begin{lema}
\label{ortho-pol}
Let $G$ be a $k$-regular graph with $d+1$ distinct eigenvalues and predistance polynomials $p_0,\ldots,p_d$. Given an integer $\ell\ge 0$, let  $\overline{{\cal C}}_{\ell}$ be the average number of circuits of length $\ell$ rooted at every vertex, that is,
$\overline{{\cal C}}_{\ell}=\frac{1}{n}\sum_{i=0}^d m_i\theta_i^{\ell}$. Then,
\begin{itemize}
\item[$(i)$]
$p_0(x)=1$, $p_1(x)=x$, $p_2(x)=\frac{1}{\gamma_2}(x^2-\alpha_1 x-k)$.
\item[$(ii)$]
For $i=0,\ldots,d$, the two highest terms of the predistance polynomial $p_i$ are given by
$$
\textstyle
p_i(x)=\frac{1}{\gamma_1\cdots\gamma_i}[x^i - (\alpha_1+\cdots+\alpha_{i-1})x^{i-1}+\cdots].
$$
\item[$(iii)$]
$\alpha_i+\beta_i+\gamma_i=k$, for $i=0,\ldots,d$.
\item[$(iv)$] $\alpha_0=0$, $\beta_0=k$, $\gamma_1=1$, $\alpha_1=\overline{{\cal C}}_{3}/\overline{{\cal C}}_{2}$, and
\begin{equation}
\label{gamma2}
    \gamma_2=\frac{\overline{{\cal C}}_{3}^2-\overline{{\cal C}}_{4}k+k^3}{k(\overline{{\cal C}}_{3}+k-k^2)}.
 \end{equation}
\item[$(v)$]
$p_0+p_1+\cdots+p_d=H$, where $H$ is the Hoffman polynomial \cite{hof63}.
\item[$(vi)$]
For every $i=0,\ldots,d$, (any multiple of) the sum polynomial $q_i=p_0+\cdots+p_i$ maximizes the quotient $r(\theta_0)/\|r\|_{\G}$ among the polynomials $r\in \Re_{i}[x]$ (notice that $q_i(\theta_0)^2/\|q_i\|_{\G}^2=q_i(\theta_0$)), and
$$
(1=)q_0(\theta_0)<q_1(\theta_0)<\cdots <q_d(\theta_0)(=H(\theta_0)=n).
$$
\end{itemize}
\end{lema}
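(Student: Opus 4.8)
The plan is to derive all six items from the general theory of orthogonal polynomials associated with the inner product \eqref{prod}, together with the elementary spectral facts about the all-ones matrix $\J=H(\A)$; no new idea is needed, so in practice one would just cite \cite{fg97,cffg09,dff16}. The three tools I would use repeatedly are: $p(\A)\j=p(k)\j$ for every polynomial $p$ (since $\A\j=k\j$, with $\j$ the all-ones vector), self-adjointness of multiplication by $x$ for $\langle\cdot,\cdot\rangle_{\G}$, i.e. $\langle xp,q\rangle_{\G}=\langle p,xq\rangle_{\G}$, and the positivity $p_i(k)>0$ from \cite{s75} (which legitimizes the normalization $\|p_i\|^2_{\G}=p_i(k)$). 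For $(i)$ I would evaluate the recurrence at $i=0$ to get $\alpha_0=0$, $\gamma_1=1$, note that $\beta_0$ equals the $p_0$-coefficient of $xp_1=x^2$, namely $\langle x^2,1\rangle_{\G}=\frac1n\tr\A^2=k$, and then read $p_2=\frac1{\gamma_2}(x^2-\alpha_1x-k)$ off the recurrence at $i=1$. For $(ii)$ I would write $p_i=\omega_i(x^i-a_ix^{i-1}+\cdots)$ and induct on the recurrence: comparing coefficients of $x^{i+1}$ gives $\omega_i=\gamma_{i+1}\omega_{i+1}$, hence $\omega_i=1/(\gamma_1\cdots\gamma_i)$; comparing coefficients of $x^{i}$ gives $a_{i+1}=a_i+\alpha_i$, which telescopes (from $a_1=0$, as $p_1=x$) to $a_i=\alpha_1+\cdots+\alpha_{i-1}$.

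For $(iii)$, I would first record the ``mirror'' identities obtained from self-adjointness: $\beta_{i-1}p_{i-1}(k)=\langle xp_i,p_{i-1}\rangle_{\G}=\langle p_i,xp_{i-1}\rangle_{\G}=\gamma_ip_i(k)$ (and similarly $\beta_ip_i(k)=\gamma_{i+1}p_{i+1}(k)$), then apply the recurrence at index $i$ to $\j$, getting $kp_i(k)\j=(\gamma_i+\alpha_i+\beta_i)p_i(k)\j$, and divide by $p_i(k)>0$. Most of $(iv)$ comes out of the previous steps; additionally $\alpha_1=\langle x^2,x\rangle_{\G}/\langle x,x\rangle_{\G}=\overline{{\cal C}}_3/\overline{{\cal C}}_2$ is immediate from the definitions of $\langle\cdot,\cdot\rangle_{\G}$ and $\overline{{\cal C}}_\ell$, and for $\gamma_2$ I would impose $\|p_2\|^2_{\G}=p_2(k)$ on $p_2=\frac1{\gamma_2}(x^2-\alpha_1x-k)$: writing $g=x^2-\alpha_1x-k$ this is $\langle g,g\rangle_{\G}=\gamma_2\,g(k)$, and since $g\perp 1,x$ one has $\langle g,g\rangle_{\G}=\langle g,x^2\rangle_{\G}=\overline{{\cal C}}_4-\alpha_1\overline{{\cal C}}_3-k\overline{{\cal C}}_2$, so substituting $\alpha_1=\overline{{\cal C}}_3/\overline{{\cal C}}_2$, $\overline{{\cal C}}_2=k$, $g(k)=k^2-\alpha_1k-k$ and simplifying gives \eqref{gamma2}.

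For $(v)$ I would compute $\langle H,p_i\rangle_{\G}=\frac1n\tr(\J p_i(\A))=\frac1n\j^{\top}p_i(\A)\j=p_i(k)=\|p_i\|^2_{\G}$, so that the expansion of the degree-$d$ polynomial $H$ in the orthogonal basis $\{p_i\}$ is exactly $H=\sum_{i=0}^dp_i$; evaluating at $\theta_0=k$ then gives $H(\theta_0)=n$ (equivalently $\langle H,H\rangle_{\G}=\frac1n\tr\J^2=n$). For $(vi)$ I would write an arbitrary $r\in\Re_i[x]$ as $r=\sum_{j=0}^ic_jp_j$, so $r(\theta_0)=\sum_jc_jp_j(k)$ and $\|r\|^2_{\G}=\sum_jc_j^2p_j(k)$, and apply Cauchy--Schwarz to $(c_j\sqrt{p_j(k)})_j$ and $(\sqrt{p_j(k)})_j$ to get $r(\theta_0)^2\le\|r\|^2_{\G}\sum_{j\le i}p_j(k)=\|r\|^2_{\G}\,q_i(\theta_0)$, with equality iff all $c_j$ agree, i.e. iff $r$ is a multiple of $q_i$; since $\|q_i\|^2_{\G}=\sum_{j\le i}p_j(k)=q_i(\theta_0)$ we get $q_i(\theta_0)^2/\|q_i\|^2_{\G}=q_i(\theta_0)$, and $q_i(\theta_0)=\sum_{j\le i}p_j(k)$ is strictly increasing (each $p_j(k)>0$) from $q_0(\theta_0)=1$ to $q_d(\theta_0)=H(\theta_0)=n$.

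The places needing care (though none is a genuine obstacle) are: the boundary index $i=d$ in $(ii)$ and $(iii)$, where $xp_d$ has degree $d+1$ and must be reduced modulo the minimal polynomial $m(x)$ before comparing with $\beta_{d-1}p_{d-1}+\alpha_dp_d$, so that the recurrence persists with $\gamma_{d+1}=0$ and $\beta_d$ is by convention $k-\alpha_d-\gamma_d$; the positivity $p_i(k)>0$, which I would not reprove but quote from \cite{s75}; and the somewhat fiddly algebra turning $\langle g,g\rangle_{\G}/g(k)$ into the closed form \eqref{gamma2}. This is exactly why the statement is cited rather than proved in full in the paper.
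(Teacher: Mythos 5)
Your proposal is correct: all six items follow from the standard orthogonal-polynomial arguments you use (the three-term recurrence, self-adjointness of multiplication by $x$, evaluation at $\theta_0=k$ via $p(\A)\j=p(k)\j$, and Cauchy--Schwarz in the basis $\{p_j\}$ with weights $p_j(k)>0$), which is exactly the route taken in \cite{cffg09,dff16}, the sources the paper cites in place of giving any proof of this lemma. The one point to tighten is the boundary case $i=d$ of item $(iii)$: rather than declaring $\beta_d=k-\alpha_d-\gamma_d$ by convention (which would make that instance of $(iii)$ vacuous), one should set $\beta_d=0$, consistent with $\gamma_{d+1}=0$ and your mirror identity $\beta_i p_i(k)=\gamma_{i+1}p_{i+1}(k)$, and then $\alpha_d+\gamma_d=k$ follows by evaluating $xp_d\equiv\beta_{d-1}p_{d-1}+\alpha_d p_d \pmod{m(x)}$ at $\theta_0$ exactly as in your argument for $i<d$.
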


A graph $G$ with diameter $D$ is called {\em $m$-partially distance-regular}, for some $m=0,\dots, D$, if its predistance polynomials satisfy $p_i(\A)=\A_i$ for every $i\le m$. In particular,  every $m$-partially distance-regular with $m\ge 1$ must be regular (see Abiad, Van Dam, and Fiol \cite{adf16}). As an alternative characterization, a graph $G$ is $m$-partially distance-regular when the intersection numbers $c_i$ ($i \leq m$), $a_i$ ($i \leq m-1$), $b_i$ ($i \leq m-1$) are well-defined.
In this case, these intersection numbers are equal to the corresponding preintersection numbers $\gamma_i$ ($i \leq m$), $\alpha_i$ ($i \leq m-1$), $\beta_i$ ($i \leq m-1$), and also $k_i$ is well-defined and equal to $p_i(\theta_0)$ for $i \le m$.  We refer to Dalf\'{o}, Van Dam,  Fiol, Garriga, and Gorissen \cite{ddfgg11} for more background.

Then, with this definition, a graph $\G$ with diameter $D=d$ is distance-regular if and only it is $d$-partially distance-regular. In fact, in this case we have the following strongest proposition, which is a combination of results in Fiol, Garriga and Yebra \cite{fgy1b}, and Dalf\'{o}, Van Dam, Fiol, Garriga and Gorissen \cite{ddfgg11}.
\begin{propo}
\label{propo-p(A)=Ad}
A regular graph $\G$ with $d+1$ different eigenvalues (and, hence, with diameter $D\le d$) is distance-regular if and only if there exists a polynomial $p$ of degree $d$ such that $p(\A)=\A_d$, in which case $p=p_d$.\qqed
\end{propo}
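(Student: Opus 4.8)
\medskip
\noindent\emph{Proof strategy.}\quad The ``only if'' implication needs no work: if $\G$ is distance-regular then its diameter is $D=d$ and, being $d$-partially distance-regular, it satisfies $p_i(\A)=\A_i$ for every $i\le d$; in particular $\A_d=p_d(\A)$ with $\deg p_d=d$. Uniqueness of $p$ is automatic: if $p(\A)=q(\A)$ with $\deg p,\deg q\le d$, then $(p-q)(\A)=\vec0$, whence $p=q$ since the minimal polynomial of $\A$ has degree $d+1$; thus $p=p_d$. So the entire plan is devoted to the converse, and the strategy is to extract from the hypothesis the spectral-excess equality $\overline{k_d}=p_d(\theta_0)$, where $\overline{k_d}:=\tfrac1n\sum_u k_d(u)$ is the mean number of vertices at distance $d$, and then to invoke the spectral excess theorem.

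For the converse, I would fix a polynomial $p$ with $\deg p=d$ and $p(\A)=\A_d$. Since $\I,\A,\dots,\A^d$ are linearly independent, $\A_d=p(\A)\ne\vec0$, so the diameter satisfies $D\ge d$; with the general bound $D\le d$ this gives $D=d$, so $\A_d$ is genuinely the distance-$d$ matrix of a diameter-$d$ graph. Next I would show that $p$ is a scalar multiple of $p_d$. Writing $p=\sum_{i=0}^d a_i p_i$ in the orthogonal basis of predistance polynomials and using $\langle p_i(\A),p_i(\A)\rangle=\|p_i\|_\G^2=p_i(\theta_0)>0$ (Lemma~\ref{ortho-pol}), one has $a_i=\langle p(\A),p_i(\A)\rangle/p_i(\theta_0)=\langle\A_d,p_i(\A)\rangle/p_i(\theta_0)$. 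For $i<d$ the matrix $p_i(\A)$ lies in the linear span of $\I,\A,\dots,\A^i$, while $\langle\A_d,\A^\ell\rangle=\tfrac1n\tr(\A_d\A^\ell)=0$ for every $\ell\le i<d$, because no walk of length $\ell$ can join two vertices at distance $d$; hence $a_i=0$ for all $i<d$, i.e.\ $\A_d=a_d\,p_d(\A)$ with $a_d\ne0$ (as $\deg p=d$).

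Then I would evaluate $\overline{k_d}$ in two ways. On one hand $\overline{k_d}=\tfrac1n\tr(\A_d^2)=\langle\A_d,\A_d\rangle=a_d^2\langle p_d(\A),p_d(\A)\rangle=a_d^2\,p_d(\theta_0)$. On the other hand, since $\j$ is an eigenvector of $\A$ for $\theta_0$, we have $n\,\overline{k_d}=\j^{\top}\A_d\,\j=a_d\,\j^{\top}p_d(\A)\,\j=a_d\,p_d(\theta_0)\,n$. Because $p_d(\theta_0)>0$, the two expressions force $a_d^2=a_d$, hence $a_d=1$; therefore $p=p_d$ and $\overline{k_d}=p_d(\theta_0)$. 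By the spectral excess theorem of Fiol and Garriga (see \cite{fgy1b}), which states that $\overline{k_d}\le p_d(\theta_0)$ always, with equality if and only if $\G$ is distance-regular, we conclude that $\G$ is distance-regular, with distance polynomials $p_i$, as claimed.

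The only substantial ingredient is the spectral excess theorem itself; everything else is bookkeeping with the inner product \eqref{prod}. The point at which a fully self-contained argument would meet resistance is in replacing that theorem by a direct deduction of distance-regularity from $\A_d=p_d(\A)$ together with $\sum_{i=0}^d p_i(\A)=\J=\sum_{i=0}^d\A_i$ (Lemma~\ref{ortho-pol}$(v)$ and Hoffman's theorem \cite{hof63}): for $d=3$ this is immediate --- subtracting the two identities and using $p_0(\A)=\I=\A_0$, $p_1(\A)=\A=\A_1$ forces $p_2(\A)=\A_2$, after which every $\A_i$ is a polynomial in $\A$ and the $0/1$ structure of the distance matrices makes all intersection numbers well-defined --- but for $d\ge4$ one must peel off the $p_i$ inductively, matching the supports of $p_i(\A)$ and $\A_i$ within the balls of radius $i$, which is essentially the content of \cite{ddfgg11}.
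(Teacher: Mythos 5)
Your proof is correct, and it is worth noting at the outset that the paper itself offers no proof of this proposition: it is stated with a tombstone and attributed to a combination of results in \cite{fgy1b} and \cite{ddfgg11}, so any comparison is against that citation rather than against an argument. Your route --- showing that a degree-$d$ polynomial $p$ with $p(\A)=\A_d$ forces $D=d$, that $p$ is orthogonal to $p_0,\dots,p_{d-1}$ because $\tr(\A_d\A^\ell)=0$ for $\ell<d$, and that the two evaluations of $\overline{k_d}$ (via $\tr(\A_d^2)$ and via $\j^{\top}\A_d\j$) force $a_d=1$ and hence $\overline{k_d}=p_d(\theta_0)$ --- is clean and all the computations check out. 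Two caveats. First, within this paper the spectral excess theorem (Theorem \ref{basic-theo}) is itself derived from Lemma \ref{lemaSPET} \emph{together with} Proposition \ref{propo-p(A)=Ad}, so you could not cite Theorem \ref{basic-theo} here without circularity; you must, as you do, appeal to an externally proved version (the correct reference is \cite{fg97}, or the short proofs in \cite{vd08} and \cite{fgg10}, rather than \cite{fgy1b}, which is the source for the locally pseudo-distance-regular machinery). Second, the equality case of the spectral excess theorem is proved in those references precisely by first establishing $p_d(\A)=\A_d$ and then deducing distance-regularity, so the hard implication of the present proposition is essentially the final step of the theorem you invoke; your reduction is logically valid but places all of the substance inside the black box. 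Your closing paragraph correctly identifies where a self-contained argument would have to work --- the inductive matching of $p_i(\A)$ with $\A_i$ for $d\ge 4$ --- and that is exactly the content of \cite{fgy1b} and \cite{ddfgg11} that the authors are implicitly relying on.
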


\begin{lema}
\label{lemaSPET}
Let $\G$ be a regular graph with diameter $D$, and let $m\le D$ be a positive integer. Let $n_i(u)$ be the number of vertices at distance at most $i\le D$ from vertex $u$ in $\G$, and let $\overline{n_i}=\frac{1}{n}\sum_{u\in V} n_i(u)$ be the average of these numbers of vertices for all $u\in V$. Then, for any nonzero polynomial $r\in \Re_{i}[x]$ we have
\begin{equation}
\label{basic-ineq}
\frac{r(\theta_0)^2}{\|r\|_{\G}^2}\le \overline{n_{i}},
\end{equation}
with equality if and only if  $r$ is a multiple of $q_{i}=p_0+\cdots +p_i$, and $q_i(\A)=\A_0+\cdots +\A_i$.
\end{lema}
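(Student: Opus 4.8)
The plan is to prove \eqref{basic-ineq} by applying the Cauchy--Schwarz inequality, for the matrix inner product $\langle \M,\N\rangle=\frac1n\tr(\M\N)$, to the pair consisting of $r(\A)$ and the $0$--$1$ matrix $\B_i:=\A_0+\A_1+\cdots+\A_i$, whose $(u,v)$-entry equals $1$ exactly when $\dist(u,v)\le i$ (so that the row sums of $\B_i$ are the numbers $n_i(u)$); this follows the same pattern as the proof of the spectral excess theorem. Throughout, $i$ is a fixed integer with $1\le i\le D$.

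First I would record the elementary but crucial fact that, since $(\A^{\ell})_{uv}=0$ whenever $\ell<\dist(u,v)$ and $r$ has degree at most $i$, the matrix $r(\A)$ has a zero $(u,v)$-entry for every pair of vertices with $\dist(u,v)>i$; equivalently, the support of $r(\A)$ is contained in that of $\B_i$. Using this, $\tr\bigl(r(\A)\B_i\bigr)=\sum_{u}\sum_{w}r(\A)_{uw}=\j^{\top}r(\A)\j$, where $\j$ is the all-ones vector, and since $\G$ is $k$-regular, $\j$ is a $\theta_0$-eigenvector of $\A$, so $r(\A)\j=r(\theta_0)\j$ and hence $\langle r(\A),\B_i\rangle=r(\theta_0)$. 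A similar, easier, computation gives $\langle \B_i,\B_i\rangle=\frac1n\sum_{u}(\B_i^2)_{uu}=\frac1n\sum_{u}n_i(u)=\overline{n_i}$, while $\langle r(\A),r(\A)\rangle=\frac1n\tr(r(\A)^2)=\|r\|_{\G}^2$ by \eqref{prod}. Cauchy--Schwarz then yields $r(\theta_0)^2=\langle r(\A),\B_i\rangle^2\le\langle r(\A),r(\A)\rangle\,\langle \B_i,\B_i\rangle=\|r\|_{\G}^2\,\overline{n_i}$, which is \eqref{basic-ineq}; here $r(\A)\neq\vec0$ and $\|r\|_{\G}>0$, because the nonzero polynomial $r$ has degree $\le i\le d$, smaller than the degree of the minimal polynomial of $\A$, and $\langle\cdot,\cdot\rangle_{\G}$ is positive definite on polynomials of degree $\le d$.

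For the equality case, Cauchy--Schwarz is tight if and only if $r(\A)$ and $\B_i$ are proportional, i.e. $r(\A)=\lambda\B_i$ for some scalar $\lambda\neq0$ (nonzero since $r(\A)\neq\vec0$). To turn this into the stated conclusion I would run the first computation again with an arbitrary test polynomial: for every $s\in\Re_i[x]$ we get $\langle s,r\rangle_{\G}=\frac1n\tr\bigl(s(\A)\,\lambda\B_i\bigr)=\lambda\,s(\theta_0)$, and likewise, expanding $s$ in the orthogonal basis $p_0,\dots,p_i$ of $\Re_i[x]$ and using the normalization $\|p_j\|_{\G}^2=p_j(\theta_0)$, one finds $\langle s,q_i\rangle_{\G}=s(\theta_0)$ with $q_i=p_0+\cdots+p_i$. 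Hence $r/\lambda-q_i$ is orthogonal to all of $\Re_i[x]$, in particular to itself, and by positive definiteness it follows that $r=\lambda q_i$, so $r$ is a multiple of $q_i$; dividing $r(\A)=\lambda\B_i$ by $\lambda$ then gives $q_i(\A)=\A_0+\cdots+\A_i$. The converse is immediate: if $q_i(\A)=\B_i$, then any nonzero multiple $r$ of $q_i$ satisfies $r(\theta_0)^2/\|r\|_{\G}^2=q_i(\theta_0)^2/\|q_i\|_{\G}^2=q_i(\theta_0)$ (using $\|q_i\|_{\G}^2=\sum_{j\le i}\|p_j\|_{\G}^2=q_i(\theta_0)$, cf.\ Lemma~\ref{ortho-pol}), while $\overline{n_i}=\langle\B_i,\B_i\rangle=\langle q_i(\A),q_i(\A)\rangle=\|q_i\|_{\G}^2=q_i(\theta_0)$, so \eqref{basic-ineq} holds with equality.

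The two inner-product evaluations and the Cauchy--Schwarz step are routine; the only point that needs a little care is the implication ``$r(\A)$ proportional to $\B_i$'' $\Rightarrow$ ``$r$ proportional to $q_i$'', which is why the test-polynomial argument is carried out rather than attempting a direct manipulation of $r(\A)=\lambda\B_i$.
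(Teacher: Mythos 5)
Your proof is correct and follows essentially the same route as the paper: both evaluate $\langle r(\A),\A_0+\cdots+\A_i\rangle=r(\theta_0)$ using that $r(\A)$ vanishes on pairs at distance greater than $i$, and then apply Cauchy--Schwarz to obtain \eqref{basic-ineq}. The only (harmless) divergence is in the equality case, where the paper invokes the extremal characterization of $q_i$ from Lemma~\ref{ortho-pol}$(vi)$ to conclude that $r$ is a multiple of $q_i$, whereas you deduce it directly from the Cauchy--Schwarz equality condition $r(\A)=\lambda(\A_0+\cdots+\A_i)$ via the test-polynomial identity $\langle s,q_i\rangle_{\G}=s(\theta_0)$, which is a slightly more self-contained version of the same argument.
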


\begin{proof}
Let $\S_{i}=\A_0+\cdots +\A_{i}$.
As $\deg r\le i$, we have $\langle r(\A),\J\rangle = \langle r(\A), \S_{i}\rangle$, where $\J$ is the all-one matrix.
But $\langle r(\A),\J\rangle=\langle r,H\rangle_{\G}=r(\theta_0)$. Then, the Cauchy-Schwarz inequality gives
$$
r^2(\theta_0)\le \|r(\A)\|^2\|\S_{i}\|^2=\|r\|^2_{\G}\overline{n_{i}},
$$
whence \eqref{basic-ineq} follows.
Besides, in case of equality we have that $r$ is multiple of $q_i$, by Lemma \ref{ortho-pol}$(vi)$, with $q_i(\theta_0)=\overline{n_{i}}$. Therefore, $q_i(\A)=\alpha \S_{i}$ for some nonzero constant $\alpha$ and taking norms we conclude that $\alpha=1$.
\end{proof}

In fact, as it was shown in Fiol \cite{f02}, the above result still holds if we change the arithmetic mean of the numbers $n_{i}(u)$, $u\in V$, by its harmonic mean.

As a consequence of Lemma \ref{lemaSPET} and Proposition \ref{propo-p(A)=Ad}, we have the  following generalization of the spectral excess theorem, due to Fiol and Garriga \cite{fg97} (for short proofs, see Van Dam \cite{vd08}, and Fiol, Gago and Garriga \cite{fgg10}).

\begin{theo}
\label{basic-theo}
Let $\G$ be a regular graph with $d+1$ distinct eigenvalues $\theta_0>\cdots >\theta_d$, and diameter $D=d$. Let $m\le D$ be a positive integer.
\begin{itemize}
\item[$(i)$]
If $\G$ is $(m-1)$-partially distance-regular for some $m<d$, and $q_{m}(\theta_0)=\overline{n_{m}}$, then $\G$ is $m$-partially distance-regular.
\item[$(ii)$]
If $q_{d-1}(\theta_0)=\overline{n_{d-1}}$, then $\G$ is distance-regular.
\end{itemize}
\end{theo}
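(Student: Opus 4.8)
The plan is to derive both parts from Lemma~\ref{lemaSPET} together with Proposition~\ref{propo-p(A)=Ad}, using the equality case of the basic inequality \eqref{basic-ineq} as the engine. For part~$(i)$, since $\G$ is $(m-1)$-partially distance-regular, we already have $p_j(\A)=\A_j$ for all $j\le m-1$, hence $q_{m-1}(\A)=\A_0+\cdots+\A_{m-1}=\S_{m-1}$. The hypothesis $q_m(\theta_0)=\overline{n_m}$ says that the polynomial $r=q_m\in\Re_m[x]$ attains equality in \eqref{basic-ineq} with $i=m$ (recall from Lemma~\ref{ortho-pol}$(vi)$ that $q_m(\theta_0)^2/\|q_m\|_{\G}^2=q_m(\theta_0)$, so the left-hand side of \eqref{basic-ineq} is exactly $q_m(\theta_0)$). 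The equality clause of Lemma~\ref{lemaSPET} then gives $q_m(\A)=\A_0+\cdots+\A_m=\S_m$. Subtracting the identity $q_{m-1}(\A)=\S_{m-1}$ yields $p_m(\A)=q_m(\A)-q_{m-1}(\A)=\S_m-\S_{m-1}=\A_m$, which is precisely the statement that $\G$ is $m$-partially distance-regular.

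For part~$(ii)$, I would first observe that the hypothesis $q_{d-1}(\theta_0)=\overline{n_{d-1}}$ forces the diameter to behave well and, via the equality case of Lemma~\ref{lemaSPET} applied with $i=d-1$ and $r=q_{d-1}$, gives $q_{d-1}(\A)=\S_{d-1}=\A_0+\cdots+\A_{d-1}$. Since $p_0+\cdots+p_d=H$ and $H(\A)=\J=\A_0+\cdots+\A_D$ (with $D=d$), subtracting gives $p_d(\A)=H(\A)-q_{d-1}(\A)=\J-\S_{d-1}=\A_d$. Now $p_d$ has degree $d$ and $p_d(\A)=\A_d$, so Proposition~\ref{propo-p(A)=Ad} applies directly and concludes that $\G$ is distance-regular. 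One should check that $\A_d\neq 0$, i.e. that the diameter is genuinely $d$ and not smaller; but this is built into the hypothesis $D=d$ stated in the theorem, and $\overline{n_{d-1}}<n$ together with $q_{d-1}(\theta_0)<q_d(\theta_0)=n$ from Lemma~\ref{ortho-pol}$(vi)$ keeps everything consistent.

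The only delicate point is making the bookkeeping of the partial sums airtight: one must be sure that $q_{m-1}(\A)=\S_{m-1}$ really does follow from $(m-1)$-partial distance-regularity (it does, by definition, since $p_j(\A)=\A_j$ for $j\le m-1$), and that the telescoping $p_m=q_m-q_{m-1}$ is valid as an identity of polynomials modulo the minimal polynomial $m(x)$, which it is since all $q_j$ have degree $\le j\le d$. I expect no real obstacle here — the content is entirely carried by Lemma~\ref{lemaSPET} and Proposition~\ref{propo-p(A)=Ad}, and the theorem is essentially a clean repackaging of those two facts, with part~$(ii)$ being the special case $m=d$ of the mechanism in part~$(i)$ combined with the distance-regularity criterion.
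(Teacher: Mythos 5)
Your proof is correct and follows exactly the route the paper intends: the paper states Theorem~\ref{basic-theo} as a direct consequence of Lemma~\ref{lemaSPET} and Proposition~\ref{propo-p(A)=Ad} without writing out the details, and your argument (equality in \eqref{basic-ineq} via Lemma~\ref{ortho-pol}$(vi)$, then telescoping $p_m(\A)=q_m(\A)-q_{m-1}(\A)=\A_m$, and for $(ii)$ using the Hoffman polynomial to get $p_d(\A)=\A_d$) is precisely that consequence spelled out. No gaps.
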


\section{The case of distance-regular graphs}

Here we study the case when $\G$ is a distance-regular graph with diameter three or four. In fact, in the first case everything is basically known (see Brouwer \cite{b84}), although only a combinatorial characterization was provided, whereas we think that the spectral characterization is also important. Indeed, Brouwer \cite{b84} proved
the following (see also Proposition 4.2.17$(i)$ in Brouwer, Cohen, and Neumaier \cite{bcn89}):

\begin{propo}\cite{b84}
Let $\G$ be a distance-regular graph with degree $k$ and diameter $d=3$. Then,
\begin{itemize}
\item[$(i)$]
$\Gamma_2$ is strongly regular  $\iff$ $c_3(a_3+a_2-a_1)=b_1a_2$.
\item[$(ii)$]
$\Gamma_{1,2}$ is strongly regular $\iff$ $\G$ has eigenvalue $-1$ $\iff$ $k=b_2+c_3-1$.
\end{itemize}
\end{propo}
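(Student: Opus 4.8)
The plan is to reduce both parts to the linear-algebraic criterion that a $k$-regular graph with adjacency matrix $\B$ is strongly regular (including the imprimitive case of a disjoint union of equal cliques, so that we need not separately discuss whether $\Gamma_2$ or $\Gamma_3$ is connected --- which is an issue for antipodal $\G$) exactly when $\B^2\in\langle\I,\B,\J\rangle$. Since $\G$ is distance-regular with $d=3$, its distance matrices $\A_0=\I$, $\A_1=\A$, $\A_2$, $\A_3$ form a basis of the Bose--Mesner algebra, $\J=\A_0+\A_1+\A_2+\A_3$, and $\A_i\A_j=\sum_{\ell}p_{ij}^{\ell}\A_{\ell}$ with the $p_{ij}^{\ell}$ the intersection numbers; so each statement will come down to computing a couple of these numbers from the intersection array, via the three-term recurrence $\A\A_i=b_{i-1}\A_{i-1}+a_i\A_i+c_{i+1}\A_{i+1}$ (with $b_0=k$, $c_1=1$, $b_3=0$).

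For $(i)$: since $\J-\I-\A_2=\A_1+\A_3$, the matrix $\A_2^2=\sum_{\ell}p_{22}^{\ell}\A_{\ell}$ lies in $\langle\I,\A_2,\J\rangle$ iff its $\A_1$- and $\A_3$-coefficients coincide, i.e. $p_{22}^1=p_{22}^3$. To evaluate these I would use $c_2\A_2=\A^2-a_1\A-k\I$ (Lemma~\ref{ortho-pol}$(i)$, with $\gamma_2=c_2$, $\alpha_1=a_1$ since $\G$ is distance-regular), expand $\A^2,\A^3,\A^4$ in the basis $\{\A_0,\ldots,\A_3\}$ by repeated use of the recurrence, square, and read off the coefficients of $\A_1$ and $\A_3$ in $\A_2^2$; this gives $p_{22}^1=b_1a_2/c_2$ and $p_{22}^3=c_3(a_3+a_2-a_1)/c_2$, hence $\Gamma_2$ is strongly regular iff $b_1a_2=c_3(a_3+a_2-a_1)$.

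For $(ii)$: I would first observe that two vertices are non-adjacent in $\Gamma_{1,2}$ precisely when they are at distance $3$ in $\G$, so $\overline{\Gamma_{1,2}}=\Gamma_3$; since the complement of a strongly regular graph is strongly regular, $\Gamma_{1,2}$ is strongly regular iff $\Gamma_3$ is, and, exactly as above (now with $\J-\I-\A_3=\A_1+\A_2$), this holds iff $p_{33}^1=p_{33}^2$. Expanding $\A_3(\A_0+\A_1+\A_2+\A_3)=k_3\J$ and using $\A_1\A_3=b_2\A_2+a_3\A_3$, the valencies $k_2=kb_1/c_2$, $k_3=kb_1b_2/(c_2c_3)$, and the identity $k_{\ell}p_{ij}^{\ell}=k_ip_{j\ell}^{i}$ (which yields $p_{23}^1=b_1b_2/c_2$ and, from the earlier $p_{22}^3$, $p_{23}^2=b_2(a_3+a_2-a_1)/c_2$), one finds $p_{33}^1=k_3-b_1b_2/c_2$ and $p_{33}^2=k_3-b_2-b_2(a_3+a_2-a_1)/c_2$; equating and simplifying with $a_i+b_i+c_i=k$, $c_1=1$, $b_3=0$ collapses to $k=b_2+c_3-1$. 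Finally, to identify this equality with $-1$ being an eigenvalue of $\G$, recall that $\spec\G$ is the spectrum of the tridiagonal intersection matrix $L$, so $-1\in\spec\G$ iff $\det(L+\I)=0$; expanding this $4\times4$ determinant along its first row and using $a_1+1-k=-b_1$ gives $\det(L+\I)=-b_1\bigl[(a_3+1)(a_2+c_2+1)-b_2c_3\bigr]$, and substituting $a_3=k-c_3$ and $a_2+c_2=k-b_2$ turns the bracket into $(k+1)(k+1-b_2-c_3)$, which (as $k+1>0$) vanishes iff $k=b_2+c_3-1$. Chaining the three equivalences gives $(ii)$.

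The steps are conceptually straightforward; the real work --- and the likeliest place for slips --- is the bookkeeping in expanding $\A^2,\A^3,\A^4$ and the products $\A_i\A_j$, followed by the parameter algebra that must cancel down to the stated closed forms. The only genuinely conceptual point is fixing the convention that ``strongly regular'' allows imprimitive (union-of-cliques) graphs, which is what makes the criterion $\B^2\in\langle\I,\B,\J\rangle$, and hence the whole reduction, valid for $\Gamma_2$ and $\Gamma_3$ irrespective of connectedness.
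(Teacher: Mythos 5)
Your proof is correct, and the computations check out: in the Bose--Mesner algebra one indeed gets $p_{22}^1=b_1a_2/c_2$ and $p_{22}^3=c_3(a_2+a_3-a_1)/c_2$, so that $\A_2^2\in\langle \I,\A_2,\J\rangle$ precisely when $c_3(a_3+a_2-a_1)=b_1a_2$; likewise $p_{33}^1=k_3-b_1b_2/c_2$ and $p_{33}^2=k_3-b_2-b_2(a_2+a_3-a_1)/c_2$ do collapse to $k=b_2+c_3-1$; and $\det(L+\I)=-b_1(k+1)(k+1-b_2-c_3)$ settles the eigenvalue-$(-1)$ equivalence. Be aware, though, that the paper contains no proof of this proposition to compare against: it is quoted from Brouwer \cite{b84} (see also Proposition 4.2.17 of \cite{bcn89}). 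The nearest thing the paper actually proves is the spectral companion, Proposition \ref{propo-drg}, and there the route is genuinely different from yours: instead of computing intersection numbers, the authors use that $\A_2=p_2(\A)$ with $p_2(x)=\frac{1}{c_2}(x^2-a_1x-k)$, so $\Gamma_2$ is strongly regular iff $p_2$ identifies two of the nontrivial eigenvalues, i.e.\ iff two of them sum to $a_1$, which via $k+\theta_1+\theta_2+\theta_3=a_1+a_2+a_3$ (the trace of the intersection matrix) becomes the condition that $a_2-c_3$ is an eigenvalue. Your Bose--Mesner computation delivers the explicit parameter conditions of the quoted statement directly, at the cost of more bookkeeping; the paper's eigenvalue argument is shorter but yields the spectral form of the criterion rather than the combinatorial one. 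Your care about the imprimitive (union-of-cliques) convention for ``strongly regular'' is well placed, since for antipodal $\G$ the graph $\Gamma_3$ is a disjoint union of cliques; the paper implicitly uses the same convention.
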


Notice that, in this case, $\Gamma_{1,2}$ is strongly regular if and only if its complement $\Gamma_{3}$ is. As commented in the Introduction, the last case was studied for general diameter by Brouwer and Fiol \cite{bf14} and Fiol \cite{f16}.

\begin{propo}
\label{propo-drg}
Let $\Gamma$ be a distance-regular graph with diameter $D=d=3$, and eigenvalues $\theta_0 (= k) > \theta_1 > \theta_2 > \theta_3$.
\begin{itemize}
\item[$(i)$]
The distance-$2$ graph $\Gamma_2$ is strongly regular if and only if $a_2-c_3$ is an eigenvalue of $\Gamma$.
\item[$(ii)$]
 The distance-$1$-or-$2$ graph $\Gamma_{1,2}$ is strongly regular if and only if $a_3-b_2$ is an eigenvalue of $\Gamma$.
\end{itemize}
\end{propo}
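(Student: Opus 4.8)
The plan is to reduce both claims to a statement about the eigenvalues of a quadratic polynomial of $\A$. Since $\Gamma$ is distance-regular of diameter $3$, we have $\A_i=p_i(\A)$ for $i=0,\dots,3$, so by Lemma \ref{ortho-pol}$(i)$ (with the preintersection numbers $\gamma_2,\alpha_1$ equal to the intersection numbers $c_2,a_1$) the distance-$2$ graph has adjacency matrix $\A_2=p_2(\A)$ with $p_2(x)=\frac1{c_2}(x^2-a_1x-k)$, while the distance-$1$-or-$2$ graph has adjacency matrix $\A_1+\A_2=\A+\A_2=(p_1+p_2)(\A)$ with $(p_1+p_2)(x)=\frac1{c_2}\big(x^2+(c_2-a_1)x-k\big)$. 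Hence the eigenvalues of $\A_2$ (resp. of $\A_1+\A_2$) are $p_2(\theta_i)$ (resp. $(p_1+p_2)(\theta_i)$), $i=0,\dots,3$. Recalling that a regular graph is strongly regular if and only if it is a connected non-complete graph with exactly three distinct eigenvalues, or a disjoint union of at least two equal complete graphs, I would first show that $\Gamma_2$ (resp. $\Gamma_{1,2}$) is strongly regular if and only if the quadratic $p_2$ (resp. $p_1+p_2$) takes the same value at two of the eigenvalues $\theta_1,\theta_2,\theta_3$.

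Granting that equivalence, the computation is short. A quadratic $\frac1{c_2}(x^2+bx+c)$ takes equal values at distinct arguments $\theta_i,\theta_j$ exactly when $\theta_i+\theta_j=-b$, so the condition becomes $\theta_i+\theta_j=a_1$ (case $(i)$) or $\theta_i+\theta_j=a_1-c_2$ (case $(ii)$) for some $1\le i<j\le 3$. Let $\theta_\ell$ be the remaining eigenvalue of $\{\theta_1,\theta_2,\theta_3\}$. Since the trace of the tridiagonal intersection matrix of $\Gamma$ equals both $a_1+a_2+a_3$ and $\theta_0+\theta_1+\theta_2+\theta_3$, we get $\theta_1+\theta_2+\theta_3=a_1+a_2+a_3-k$, hence $\theta_\ell=a_2+a_3-k$ in case $(i)$ and $\theta_\ell=a_2+a_3-k+c_2$ in case $(ii)$. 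The diameter-$3$ relations $k=a_3+c_3$ (that is, $b_3=0$) and $a_2+b_2+c_2=k$ then rewrite these as $\theta_\ell=a_2-c_3$ and $\theta_\ell=a_3-b_2$, which proves the ``only if'' parts. For the converse, if $a_2-c_3$ (resp. $a_3-b_2$) is an eigenvalue of $\Gamma$, the bounds $a_i\le k-c_i\le k-1$ and $a_2+c_2=k-b_2\le k-1$ prevent it from being $\theta_0=k$, so it is $\theta_\ell$ for some $\ell\in\{1,2,3\}$; reversing the same identities shows the complementary pair $\{i,j\}=\{1,2,3\}\setminus\{\ell\}$ satisfies $\theta_i+\theta_j=a_1$ (resp. $a_1-c_2$), so $p_2$ (resp. $p_1+p_2$) identifies two of $\theta_1,\theta_2,\theta_3$, and the equivalence returns strong regularity.

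The step I expect to be the main obstacle is making that equivalence watertight, because $\Gamma_2$ need not be connected --- already the $3$-cube has $\Gamma_2=2K_4$ --- whereas $\Gamma_{1,2}$ contains $\Gamma$ as a spanning subgraph, hence is connected and (having vertices at distance $3$) non-complete, so for it ``strongly regular'' is simply ``exactly three distinct eigenvalues'', which by the preceding analysis is exactly the identification of two of $\theta_1,\theta_2,\theta_3$. For $\Gamma_2$ one argues: if $p_2$ identifies $\theta_0$ with some $\theta_m$, then the top eigenvalue of $\A_2$ is non-simple and $\Gamma_2$ is disconnected; such an identification cannot occur alone together with strong regularity, because then $\theta_0$ must pair with the leftover $\theta_\ell$ (pairing with $\theta_i$ or $\theta_j$ would force $\theta_0\in\{\theta_i,\theta_j\}$), and then all four values $p_2(\theta_0),\dots,p_2(\theta_3)$ collapse to two, so $\Gamma_2$ is a disjoint union of cliques --- again captured by an identification inside $\{\theta_1,\theta_2,\theta_3\}$. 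Conversely, an identification only among $\theta_1,\theta_2,\theta_3$ leaves the top eigenvalue of $\A_2$ simple, so $\Gamma_2$ is connected; it is non-empty (distance-$2$ pairs exist) and non-complete (adjacent vertices of $\Gamma$ are non-adjacent in $\Gamma_2$), hence strongly regular. With this case analysis in place, the eigenvalue-sum identities used above finish the proof.
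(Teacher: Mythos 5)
Your proposal is correct and follows essentially the same route as the paper's proof: both identify the eigenvalues of $\Gamma_2$ (resp.\ $\Gamma_{1,2}$) as the values of the quadratic $p_2$ (resp.\ $p_1+p_2$) at the $\theta_i$, deduce $\theta_i+\theta_j=a_1$ (resp.\ $a_1-c_2$) from the coincidence of two such values, and pin down the remaining eigenvalue via the trace identity $k+\theta_1+\theta_2+\theta_3=a_1+a_2+a_3$ together with $k=a_3+c_3$ and $a_2+b_2+c_2=k$. You additionally work out the converse and the disconnected-$\Gamma_2$ case, which the paper dismisses as trivial; these details are accurate.
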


\begin{proof}
We only prove $(i)$, as the proof of $(ii)$ is similar.
As $\G_2$ has adjacency matrix $\A_2=p_2(\A)$, where $p_2(x)=\frac{1}{c_2}(x^2 - a_1 x - k)$, it has eigenvalues $\frac{1}{c_2}(\theta^2 - a_1 \theta - k)$, where $\theta$ is an eigenvalue of $\Gamma$.
For two non-trivial eigenvalues $\eta,\theta$ of $\Gamma$, assume that $\eta^2 - a_1 \eta -k = \theta^2 - a_1 \theta - k$.
This implies $\theta = \eta$ or $\theta + \eta = a_1$.
Let $\tau$ be the third non-trivial eigenvalue of $\Gamma$.
Then $k + \theta + \eta + \tau = a_1 + a_2 + a_3$ and the result follows.
The other direction is trivial to see.
\end{proof}

For diameter $D=d=4$ only the case of the distance-$1$-or-$2$ graph is known (see Proposition 4.2.18 in Brouwer, Cohen, and Neumaier \cite{bcn89}). In the following result, we give an equivalent characterization of this case and, moreover, we study the case of the distance-$2$ graph which, as far as we know, it is new.
The proof is as in Proposition \ref{propo-drg}. For instance, notice that, in case $(i)$, for $\Gamma_2$ to have only two nontrivial distinct eigenvalues, the only possibility is that $p_2(\theta_1)=p_2(\theta_4)$ and
$p_2(\theta_2)=p_2(\theta_3)$.

\begin{propo}
\label{propo-drg-G12}
Let $\Gamma$ be a distance-regular graph with diameter four and eigenvalues $\theta_0 (= k) > \theta_1 > \theta_2 > \theta_3> \theta_4$.
\begin{itemize}
\item[$(i)$]
The distance-$2$ graph $\Gamma_2$ is strongly regular if and only if $\theta_1 + \theta_4 = a_1 = \theta_2 + \theta_3$.
\item[$(ii)$]
The distance-$1$-or-$2$ graph $\Gamma_2$ is strongly regular if and only if $\theta_1 + \theta_4 = a_1-c_2 = \theta_2 + \theta_3$.
\end{itemize}
\end{propo}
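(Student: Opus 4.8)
The plan is to follow the proof of Proposition~\ref{propo-drg} almost verbatim, the only genuinely new point being a short combinatorial observation on how a quadratic can glue distinct eigenvalues together. Since $\Gamma$ is distance-regular of diameter $d=4$ it is in particular $2$-partially distance-regular, so by Lemma~\ref{ortho-pol}$(i)$ (with $\gamma_2=c_2$ and $\alpha_1=a_1$) we have $\A_2=p_2(\A)$ with $p_2(x)=\frac{1}{c_2}(x^2-a_1x-k)$, and hence
\[
\A_1+\A_2=\A+p_2(\A)=q(\A),\qquad q(x)=x+p_2(x)=\frac{1}{c_2}\bigl(x^2-(a_1-c_2)x-k\bigr).
\]
Both $\Gamma_2$ and $\Gamma_{1,2}$ are regular, of degrees $k_2=p_2(\theta_0)$ and $k+k_2=q(\theta_0)$ respectively, and a connected regular graph is strongly regular exactly when it has three distinct eigenvalues. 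So $(i)$ reduces to: the quadratic $f(x)=\frac{1}{c_2}(x^2-\sigma x-k)$ with $\sigma=a_1$ takes exactly three values on $\spec\Gamma=\{\theta_0,\theta_1,\theta_2,\theta_3,\theta_4\}$ if and only if $\theta_1+\theta_4=\sigma=\theta_2+\theta_3$, and $(ii)$ is the same statement with $\sigma=a_1-c_2$.

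To prove this equivalence I would note first that the parabola $f$ has axis of symmetry $x=\sigma/2$, so $f(\theta)=f(\eta)$ with $\theta\neq\eta$ holds iff $\theta+\eta=\sigma$. If $\Gamma_2$ (resp.\ $\Gamma_{1,2}$) is strongly regular it is connected, so $f(\theta_0)$ is a simple eigenvalue of $f(\A)$ and $\theta_0$ is matched with no other $\theta_i$; hence $f$ takes three values on $\spec\Gamma$ iff it takes exactly two on $\{\theta_1,\theta_2,\theta_3,\theta_4\}$. A quadratic attains each value at most twice, so this forces these four distinct reals to split into two pairs each summing to $\sigma$. Comparing the three possible pairings of $\theta_1>\theta_2>\theta_3>\theta_4$, with pair-sums $(\theta_1+\theta_2,\theta_3+\theta_4)$, $(\theta_1+\theta_3,\theta_2+\theta_4)$ and $(\theta_1+\theta_4,\theta_2+\theta_3)$, only the last can be balanced, since the strict inequalities $\theta_1+\theta_2>\theta_3+\theta_4$ and $\theta_1+\theta_3>\theta_2+\theta_4$ rule out the first two; this gives $\theta_1+\theta_4=\sigma=\theta_2+\theta_3$. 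For the converse, if $\theta_1+\theta_4=\theta_2+\theta_3$, set $\sigma$ equal to this common value; then no $\theta_i$ with $i\ge 1$ can satisfy $\theta_0+\theta_i=\sigma$ (it would force $\theta_0$ to coincide with one of $\theta_1,\theta_2,\theta_3,\theta_4$), so $f(\theta_0)$ is simple, $f$ takes exactly two values on $\{\theta_1,\theta_2,\theta_3,\theta_4\}$ via the nested pairing (not one, since $f(\theta_1)=f(\theta_2)$ would give $\theta_1=\theta_3$), and therefore $\Gamma_2$ (resp.\ $\Gamma_{1,2}$) is a connected regular graph with three distinct eigenvalues, i.e.\ strongly regular. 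Substituting $\sigma=a_1$ and $\sigma=a_1-c_2$ yields $(i)$ and $(ii)$.

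The remaining verifications — that $\Gamma_2$ and $\Gamma_{1,2}$ are regular of the stated degrees, that $f(\theta_0)$ is always a dominant eigenvalue (Perron--Frobenius) so that ``at most three distinct eigenvalues'' coincides here with ``exactly three'', and that a connected regular graph with three distinct eigenvalues is strongly regular — are standard and carry over unchanged from the diameter-$3$ argument. The one place where the diameter-$4$ case really differs is the pair-sum comparison: with three nontrivial eigenvalues one only needs a single pair summing to $a_1$ (whence the ``$a_2-c_3$ is an eigenvalue'' reformulation via trace identities), whereas with four nontrivial eigenvalues one needs two such pairs, and the ordering leaves only the nested splitting $\{\theta_1,\theta_4\},\{\theta_2,\theta_3\}$ available. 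I expect that elementary observation to be the only step demanding any care.
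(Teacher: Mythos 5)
Your proposal is correct and follows exactly the route the paper intends: the paper gives no separate argument for this proposition beyond the remark that ``the proof is as in Proposition~\ref{propo-drg}'', i.e.\ one writes $\A_2=p_2(\A)$ and $\A_1+\A_2=q(\A)$ with $p_2(x)=\frac{1}{c_2}(x^2-a_1x-k)$ and matches eigenvalues via the symmetry of the quadratic. The only genuinely new step for diameter four --- that the ordering $\theta_1>\theta_2>\theta_3>\theta_4$ forces the nested pairing $\{\theta_1,\theta_4\},\{\theta_2,\theta_3\}$, and that $\theta_0$ cannot be paired because the Perron eigenvalue of a (connected) strongly regular graph is simple --- is precisely the detail the paper leaves implicit, and you supply it correctly.
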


An example of distance-regular graph satisfying the conditions of Proposition \ref{propo-drg-G12} is the Hamming graph $H(4,3)$ (see Example \ref{Hamming} in the next section).
Another example  would be the (possible) graph corresponding to the feasible array $\{39,32,20,2;1,4,16,30\}$ (see Brouwer, Cohen, and Neumaier \cite[p. 420]{bcn89}). If it exists, this would be a graph with $n=768$ vertices and spectrum
$39^1,15^{52},7^{117},-1^{468},-9^{130}$. In this case, its distance-$2$ graph would have spectrum $312^1$, $24^{182}$, $-8^{585}$.

\section{The case of regular graphs}

Now we want to conclude the same result as above but only requiring that
the graph $\G$ is regular.
In this case, we use the predistance polynomials and preintersection numbers. Notice that  now  $p_i(\A)$ is not necessarily the distance-$i$ matrix $\A_i$
(usually not even a $0$-$1$ matrix).
 However, as above, we consider that $p_2(\A)$ has only three distinct eigenvalues.

\subsection{The case of diameter three}
We begin with the case of $d=3$ (that is, assuming that $\G$ has four distinct eigenvalues).

\begin{theo}
\label{teo(d=3)G2}
Let $\G$ be a regular graph with degree $k$, $n$ vertices, spectrum $\spec \G=\{\theta_0,\theta_1^{m_1},$ $\theta_2^{m_2},\theta_3^{m_3}\}$, where $\theta_0(=k)>\theta_1>\theta_2>\theta_3$, and preintersection number $\gamma_2$ given by \eqref{gamma2}. Let $\overline{k_3}=\frac{1}{n}\sum_{u\in V} k_3(u)$ be the average number of vertices at distance $3$ from every vertex in $\G$.
Consider the polynomials
\begin{align}
\label{s1(x)}
s_1(x) &=x^2-(\theta_1+\theta_3-\gamma_2)x+\gamma_2+\theta_2(\theta_1-\theta_2+\theta_3),\\
\label{s2(x)}
s_2(x) &=x^2-(\theta_1+\theta_2-\gamma_2)x+\gamma_2+\theta_3(\theta_1-\theta_3+\theta_2).
\end{align}
Then,
\begin{equation}
\label{spet-3}
\overline{k_3}\le \displaystyle\frac{n\sum_{i=1}^3m_i(s_j(\theta_i)-\tau_j)^2}{\sum_{i=0}^3m_i(s_j(\theta_i)-\tau_j)^2},
\quad j=1,2,
\end{equation}
where
\begin{equation}
\tau_j=\displaystyle\frac{s_j(\theta_0)\sum_{i=1}^3 m_i s_j(\theta_i)-\sum_{i=1}^3 m_i s_j(\theta_i)^2}
{s_j(\theta_0)(n-1)-\sum_{i=1}^3 m_i s_j(\theta_i)}, \quad j=1,2.
\label{tau1}
\end{equation}
Equality in \eqref{spet-3} holds for some $j\in\{1,2\}$ if and only if $\G$ is a distance-regular graph and
its distance-$2$ graph $\Gamma_2$
is strongly regular, with eigenvalues
$$
\lambda_0=n-\overline{k_3}-\theta_0-1,\quad
\lambda_1=((\theta_1-\theta_2)(\theta_2-\theta_3)-\tau_1)/\gamma_2, \mbox{ and }  \lambda_2=-\tau_1/\gamma_2,
$$
or
$$
\lambda_0=n-\overline{k_3}-\theta_0-1,\quad
\lambda_1=-\tau_2/\gamma_2, \mbox{ and }  \lambda_2=((\theta_1-\theta_3)(\theta_3-\theta_2)-\tau_2)/\gamma_2,
$$
where $k_3=\overline{k_3}$ is the constant value of the number of vertices at distance $3$
from any vertex in $\G$.
\end{theo}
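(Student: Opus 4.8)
The plan is to obtain \eqref{spet-3} as a single instance of Lemma~\ref{lemaSPET}, and to extract the equality case from the equality clause of that lemma together with Proposition~\ref{propo-p(A)=Ad}. First I would note that, since $\G$ has $d+1=4$ distinct eigenvalues, its diameter is at most $3$, so $n_2(u)+k_3(u)=n$ for every vertex $u$, whence $\overline{n_2}=n-\overline{k_3}$. For fixed $j$ and any real $\tau$ the polynomial $s_j-\tau$ still lies in $\Re_2[x]$, so Lemma~\ref{lemaSPET} (with $i=2$) gives $(s_j(\theta_0)-\tau)^2/\|s_j-\tau\|_{\G}^2\le\overline{n_2}$; writing $\|s_j-\tau\|_{\G}^2=\frac1n\sum_{i=0}^3 m_i(s_j(\theta_i)-\tau)^2$ and using $m_0=1$ and $\overline{n_2}=n-\overline{k_3}$, this rearranges to $\overline{k_3}\le n-(s_j(\theta_0)-\tau)^2/\|s_j-\tau\|_{\G}^2$ for all $\tau$. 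The sharpest bound comes from maximizing the quotient over $\tau$: differentiating, the $\tau^2$-terms cancel and one is left with a single linear equation whose solution is exactly the $\tau_j$ of \eqref{tau1} (the only other critical point, $\tau=s_j(\theta_0)$, makes the quotient $0$), and substituting $\tau=\tau_j$ produces \eqref{spet-3}. Moreover, by the equality clause of Lemma~\ref{lemaSPET}, equality holds in \eqref{spet-3} for a given $j$ if and only if $r_j:=s_j-\tau_j$ is a (necessarily nonzero) scalar multiple of $q_2=p_0+p_1+p_2$ and $q_2(\A)=\A_0+\A_1+\A_2$.

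For the forward implication of the equality statement, suppose equality holds, say for $j=1$. From $q_2(\A)=\A_0+\A_1+\A_2$ and Lemma~\ref{ortho-pol}$(v)$ we get $\A_3=\J-q_2(\A)=p_3(\A)$ with $p_3=H-q_2$ of degree $3=d$; Proposition~\ref{propo-p(A)=Ad} then makes $\G$ distance-regular, so in particular $\alpha_1=a_1$ and $k_3(u)=\overline{k_3}$ for all $u$. Since $r_1=s_1-\tau_1$ is monic of degree $2$, it must equal $\gamma_2 q_2=x^2+(\gamma_2-\alpha_1)x+(\gamma_2-k)$ (using Lemma~\ref{ortho-pol}$(i)$); comparing the coefficient of $x$ forces $\alpha_1=\theta_1+\theta_3$, and comparing constant terms forces $\tau_1=k+\theta_2(\theta_1-\theta_2+\theta_3)$. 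As $a_1=\alpha_1=\theta_1+\theta_3$, we have $p_2(\theta_1)=p_2(\theta_3)$, so the $0$-$1$ matrix $\A_2=p_2(\A)$ has exactly three distinct eigenvalues $p_2(\theta_0)$, $p_2(\theta_1)=p_2(\theta_3)$, $p_2(\theta_2)$; because $\G$ is not bipartite here (a bipartite distance-regular graph of diameter $3$ has $a_1=0$ and $\theta_3=-k$, which would force $\theta_1=k=\theta_0$), $\Gamma_2$ is connected and hence strongly regular. Substituting $\alpha_1=\theta_1+\theta_3$ and the value of $\tau_1$ into $p_2(x)=\frac1{\gamma_2}(x^2-\alpha_1 x-k)$ gives $p_2(\theta_0)=k_2=n-\overline{k_3}-\theta_0-1$, $p_2(\theta_2)=-\tau_1/\gamma_2$ and $p_2(\theta_1)=((\theta_1-\theta_2)(\theta_2-\theta_3)-\tau_1)/\gamma_2$, with $\lambda_1>\lambda_2$ because $\gamma_2>0$; this is the first list. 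The case $j=2$ is identical after interchanging $\theta_2$ and $\theta_3$, and gives the second list.

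For the converse, assume $\G$ is distance-regular with $\Gamma_2$ strongly regular. Then $\G$ is $2$-partially distance-regular, so $q_2(\A)=\A_0+\A_1+\A_2$; and $\Gamma_2=p_2(\A)$ having three distinct eigenvalues forces $p_2(\theta_i)=p_2(\theta_j)$, i.e.\ $a_1=\theta_i+\theta_j$, for exactly one pair $\{i,j\}\subseteq\{1,2,3\}$. If $\{i,j\}=\{1,3\}$ then $\alpha_1=a_1=\theta_1+\theta_3$, and the coefficient comparison of the previous paragraph gives $s_1-(k+\theta_2(\theta_1-\theta_2+\theta_3))=\gamma_2 q_2$, so the polynomial $\gamma_2 q_2$ attains equality in Lemma~\ref{lemaSPET}. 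Since $(s_1(\theta_0)-\tau)^2/\|s_1-\tau\|_{\G}^2\le\overline{n_2}=q_2(\theta_0)\ge 1$ with equality at $\tau=k+\theta_2(\theta_1-\theta_2+\theta_3)$, that value is a maximizer of the quotient, hence it equals the critical point $\tau_1$ (it is not $s_1(\theta_0)$, where the quotient vanishes); therefore equality holds in \eqref{spet-3} for $j=1$, and by the previous paragraph the eigenvalues of $\Gamma_2$ are the first list. The pair $\{i,j\}=\{1,2\}$ gives, symmetrically, equality for $j=2$ and the second list, while the pair $\{i,j\}=\{2,3\}$ would yield a distance-regular graph with $\Gamma_2$ strongly regular whose spectrum is neither list and for which equality fails in \eqref{spet-3} — still consistent with the asserted equivalence.

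I expect the genuine labour to be two mechanical verifications I have glossed over: (a) that the nontrivial critical point of $\tau\mapsto(s_j(\theta_0)-\tau)^2/\|s_j-\tau\|_{\G}^2$ is precisely the expression \eqref{tau1}; and (b) that, once $\alpha_1=\theta_1+\theta_3$ (resp.\ $\theta_1+\theta_2$), both the shifted polynomial $\gamma_2 q_2$ and the three values $p_2(\theta_i)$ take the exact closed forms claimed in terms of $\tau_j$ and the $\theta_i$. Running the constant-term comparison for $s_j-\tau_j$ in parallel with the computation of the $\Gamma_2$-spectrum is the step most likely to generate sign errors, but it is routine and uses only Lemma~\ref{ortho-pol} and the definitions \eqref{s1(x)}, \eqref{s2(x)}, \eqref{tau1}.
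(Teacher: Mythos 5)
Your overall strategy coincides with the paper's: apply Lemma~\ref{lemaSPET} with $i=2$ to the shifted polynomial $s_j-\tau$, use $\overline{n_2}=n-\overline{k_3}$, optimize over $\tau$ to obtain \eqref{tau1}, and read off the equality case from the equality clause of the lemma together with Proposition~\ref{propo-p(A)=Ad} (the paper invokes Theorem~\ref{basic-theo}, which is the same mechanism). Your forward direction is, if anything, cleaner than the paper's, since you extract $\alpha_1=\theta_1+\theta_3$ and the value of $\tau_1$ directly from the identity $s_1-\tau_1=\gamma_2 q_2$ rather than positing the case structure in advance.

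The genuine gap is in the converse, in your dismissal of the pair $\{i,j\}=\{2,3\}$. If a distance-regular graph of diameter $3$ with $\Gamma_2$ strongly regular could have $a_1=\theta_2+\theta_3$, then the right-hand side of the intended equivalence would hold while equality in \eqref{spet-3} fails for both $j=1,2$, so the theorem (which is meant to characterize ``distance-regular with $\Gamma_2$ strongly regular,'' as the abstract and Proposition~\ref{propo-drg} make clear) would simply be false; your remark that this case is ``still consistent with the asserted equivalence'' is therefore not tenable, and in any event you give no argument that its $\Gamma_2$-spectrum would avoid both displayed lists. This third case must be, and can be, excluded: that is precisely what the opening sentence of the paper's proof (``the eigenvalues of $p_2(x)$ interlace those of $\G$'') is doing. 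Concretely, if $a_1=\theta_2+\theta_3$, then the vertex of the parabola $p_2(x)=\frac{1}{c_2}(x^2-a_1x-k)$ lies in $(\theta_3,\theta_2)$ and $p_2(\theta_2)=p_2(\theta_3)>0$ (the sign of $p_2(\theta_3)$ is $(-1)^2$ by the standard alternation of the sequence $p_0(\theta_d),\dots,p_d(\theta_d)$), so both zeros of $p_2$ lie strictly inside $(\theta_3,\theta_2)$; then $p_2(\theta_i)>0$ for every $i$, contradicting $\langle p_2,p_0\rangle_{\G}=0$. Equivalently, the separation of the zeros of orthogonal polynomials by the mass points rules this case out. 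Adding this one observation closes your argument; the items you flagged as merely mechanical ((a) the critical point of $\Phi$ and (b) the closed forms of the $p_2(\theta_i)$) do check out.
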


\begin{proof}
Taking into account that the eigenvalues of $p_2(\A)$ interlace those of $\G$
(because of the orthogonality of the predistance polynomials with respect to the scalar product in \eqref{prod}, see for instance
C\'amara, F\`abrega, Fiol, and Garriga~\cite{cffg09}, or Szeg\"o~\cite{s75}), the only possible cases are:
\begin{enumerate}
\item
$p_2(\theta_1)=p_2(\theta_3)=\sigma_1/\gamma_2$ and $p_2(\theta_2)=-\tau_1/\gamma_2$,
\item
$p_2(\theta_1)=p_2(\theta_2)=\sigma_2/\gamma_2$ and $p_2(\theta_3)=-\tau_2/\gamma_2$,
\end{enumerate}
where $\sigma_j$ and $\tau_j$, for $j=1,2$, are constants.
We only prove the first case, as the other is similar. The main idea is to apply Lemma \ref{lemaSPET} with a polynomial $r\in \Re_{2}[x]$
having the desired properties of (any multiple of) $q_{2}$. To this end, let us assume
that $p_2(\theta_1)=p_2(\theta_3)=\sigma_1/\gamma_2$, and $p_2(\theta_2)=-\tau_1/\gamma_2$
where $\sigma_1$ and $\tau_1$ are constants.
Thus, if we consider a generic monic polynomial $r(x)=x^2+\alpha x+\beta=\gamma_2 q_2(x)$,
where $q_2(x)=p_2(x)+x+1$, we must have
\begin{align*}
r(\theta_1) &= \theta_1^2+\alpha\theta_1+\beta =\sigma_1+\gamma_2\theta_1+\gamma_2,\\
r(\theta_2) &= \theta_2^2+\alpha\theta_2+\beta =-\tau_1+\gamma_2\theta_2+\gamma_2,\\
r(\theta_3) &= \theta_3^2+\alpha\theta_3+\beta =\sigma_1+\gamma_2\theta_3+\gamma_2.
\end{align*}
From the first and last equation we get $\alpha=\gamma_2-\theta_1-\theta_3$ and, hence, the second equation yields $\beta=\gamma_2+\theta_2(\theta_1-\theta_2+\theta_3)-\tau_1$.
Then, we must take $r(x)=s_1(x)-\tau_1$, where $s_1(x)$ is as in \eqref{s1(x)},
and \eqref{basic-ineq} yields
\begin{equation}
\label{Phi(tau)}
\Phi(\tau_1)=\frac{r(\theta_0)^2}{\|r\|_{\G}^2}=\frac{n(s(\theta_0)-\tau_1)^2}
{\sum_{i=0}^3 m_i(s(\theta_i)-\tau_1)^2}\le \overline{s_2}=n-\overline{k_3}.
\end{equation}

Now, to have the best result in \eqref{Phi(tau)}, and, since we are mostly interested in the case of equality, we find the maximum of the function $\Phi$, which is attained at
$\tau_1$ given by \eqref{tau1}. Then, as $\overline{s_2}=n-\overline{k_3}$, the claimed inequality follows. Moreover,
in case of equality, we know, by Theorem \ref{basic-theo}, that $\G$ is distance-regular with $r(x)=\gamma q_{d-1}(x)$ for some constant $\gamma$, which it is 
$\gamma=\gamma_2$.
Then, we get (with standard notation $P_{ij}=p_j(\theta_i)$)
\begin{align*}
P_{22} &= p_2(\theta_2)=-\frac{\tau_1}{\gamma_2},\\
P_{i2} &= p_2(\theta_i)=\frac{\sigma_1}{\gamma_2}=\frac{1}{\gamma2}((\theta_1-\theta_2)(\theta_2-\theta_3)-\tau_1),  \qquad i=1,3.
\end{align*}
To prove the converse, we only need to carry out a simple computation. Indeed, assume that $\G$ is a distance-regular graph, with $k_i$ being the vertices at distance $i=1,2,3$ from any vertex ($k_1=k$), and $p_2(\theta_1)=p_d(\theta_3)$. Then, the same reasoning as in Proposition \ref{propo-drg} gives $a_1=\theta_1+\theta_3$. Then, from $kb_1=c_2k_2=c_2(n-k_3-k-1)$ and $a_1+b_1+1=k$, we get that $c_2=\frac{k(k-1-\theta_1-\theta_3)}{n-k_3-k-1}$. Thus, by putting $\gamma_2=c_2$ in $s_1(x)$ of \eqref{s1(x)} to compute $\tau_1$ in \eqref{tau1}, the inequality \eqref{spet-3} becomes an equality (since $\overline{k_3}=k_3$).
\end{proof}

The following result gives similar conditions for $\G$ to be distance-regular with the distance-$1$-or-$2$ graph $\G_{1,2}$ being strongly regular.

\begin{theo}
\label{teo(d=3)G12}
Let $\G$ be a regular graph with degree $k$, $n$ vertices, spectrum $\spec \G=\{\theta_0,\theta_1^{m_1},$ $\theta_2^{m_2},\theta_3^{m_3}\}$, where $\theta_0(=k)>\theta_1>\theta_2>\theta_3$, and preintersection number $\gamma_2$. Let $\overline{k_3}=\frac{1}{n}\sum_{u\in V} k_3(u)$ be the average number of vertices at distance $3$ from every vertex in $\G$.
Consider the polynomials
\begin{align}
\label{s1(x)G12}
s_1(x) &=x^2-(\theta_1+\theta_3)x+\gamma_2+\theta_2(\theta_1-\theta_2+\theta_3),\\
\label{s2(x)G12}
s_2(x) &=x^2-(\theta_1+\theta_2)x+\gamma_2+\theta_3(\theta_1-\theta_3+\theta_2).
\end{align}
Then,
\begin{equation}
\label{spet-3G12}
\overline{k_3}\le \frac{n\sum_{i=1}^3m_i(s_j(\theta_i)-\tau_j)^2}{\sum_{i=0}^3m_i(s_j(\theta_i)-\tau_j)^2},
\quad j=1,2,
\end{equation}
where
\begin{equation}
\label{tau}
\tau_j=\frac{s_j(\theta_0)\sum_{i=1}^3 m_i s_j(\theta_i)-\sum_{i=1}^3 m_i s_j(\theta_i)^2}
{s_j(\theta_0)(n-1)-\sum_{i=1}^3 m_i s_j(\theta_i)}, \quad j=1,2.
\end{equation}
Equality in \eqref{spet-3G12} holds with some $j\in\{1,2\}$ if and only if $\G$ is a distance-regular graph and
its distance-$1$-or-$2$ graph $\Gamma_{1,2}$
is strongly regular, 
\end{theo}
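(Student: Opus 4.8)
The plan is to run the argument of Theorem~\ref{teo(d=3)G2} essentially verbatim, with a single substitution: replace the polynomial $p_2$ (whose image under $\A$ is the adjacency matrix $\A_2$ of $\Gamma_2$) by $p_1+p_2=q_2-p_0$ (whose image under $\A$ is $\A_1+\A_2$, the adjacency matrix of $\Gamma_{1,2}$), while still feeding $q_2=p_0+p_1+p_2$ into Lemma~\ref{lemaSPET}. Since $p_1+p_2$ is quadratic with leading coefficient $1/\gamma_2>0$ and $(p_1+p_2)(\theta_0)=k_1+k_2$ is the simple Perron eigenvalue of $\Gamma_{1,2}$ (which is connected, containing $\G$), for $\G$ to be distance-regular with $\Gamma_{1,2}$ strongly regular the matrix $(p_1+p_2)(\A)$ must have exactly three distinct eigenvalues, and the same interlacing discussion as in Theorem~\ref{teo(d=3)G2} leaves two cases: Case~$1$, in which $(p_1+p_2)(\theta_1)=(p_1+p_2)(\theta_3)$ with $(p_1+p_2)(\theta_2)=-\tau_1/\gamma_2$; and Case~$2$, in which $(p_1+p_2)(\theta_1)=(p_1+p_2)(\theta_2)$ with $(p_1+p_2)(\theta_3)=-\tau_2/\gamma_2$.

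I would treat Case~$1$ (Case~$2$ following by interchanging $\theta_2$ and $\theta_3$). Write the monic polynomial $r(x)=x^2+\alpha x+\beta$ that is to be identified with $\gamma_2 q_2(x)=\gamma_2(p_0+p_1+p_2)(x)$. The coincidence $r(\theta_1)=r(\theta_3)$ forces $\alpha=-(\theta_1+\theta_3)$; note that, in contrast with Theorem~\ref{teo(d=3)G2} where the coincidence was among the values of $p_2$ rather than of $p_1+p_2$ and produced $\alpha=\gamma_2-\theta_1-\theta_3$, there is now no extra summand $\gamma_2$. Imposing $r(\theta_2)=\gamma_2-\tau_1$ (that is, $-\tau_1/\gamma_2$ is the value of $p_1+p_2$ at $\theta_2$) and solving gives $\beta=\gamma_2+\theta_2(\theta_1-\theta_2+\theta_3)-\tau_1$, so $r=s_1-\tau_1$ with $s_1$ as in \eqref{s1(x)G12}. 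Applying Lemma~\ref{lemaSPET} with $i=2$ to this $r$ and using $\overline{n_2}=n-\overline{k_3}$ gives $r(\theta_0)^2/\|r\|_{\G}^2\le n-\overline{k_3}$; rearranging, and choosing $\tau_1$ so as to maximize the left-hand side (a one-variable maximization of a ratio of quadratics in $\tau_1$, whose critical value is precisely \eqref{tau}), yields \eqref{spet-3G12}.

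For the equality statement I would argue as in Theorem~\ref{teo(d=3)G2}: equality in \eqref{spet-3G12} for $j=1$ forces, via the equality clause of Lemma~\ref{lemaSPET} together with Theorem~\ref{basic-theo}$(ii)$ applied with $d=3$ (so $q_{d-1}=q_2$), that $q_2(\A)=\A_0+\A_1+\A_2$ and hence $\G$ is distance-regular, and that $r=s_1-\tau_1$ is a multiple of $q_2$; matching leading coefficients the multiple is $\gamma_2$, and then matching the linear coefficient gives $\gamma_2-\alpha_1=-(\theta_1+\theta_3)$, which is exactly Case~$1$ and shows that $\A_1+\A_2=(p_1+p_2)(\A)$ has the three distinct eigenvalues $(p_1+p_2)(\theta_0)=n-1-\overline{k_3}$, $(p_1+p_2)(\theta_1)=(p_1+p_2)(\theta_3)=((\theta_1-\theta_2)(\theta_2-\theta_3)-\tau_1)/\gamma_2$, and $-\tau_1/\gamma_2$ (one checks that $\overline{k_3}=k_3>0$ here, so $\Gamma_{1,2}$ is not complete); being connected and regular, $\Gamma_{1,2}$ is strongly regular. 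Conversely, if $\G$ is distance-regular with $\Gamma_{1,2}$ strongly regular, then $\gamma_2=c_2$, $\A_i=p_i(\A)$ and $q_2(\A)=\A_0+\A_1+\A_2$ automatically, so the Cauchy--Schwarz step of Lemma~\ref{lemaSPET} is an equality for $r=\gamma_2 q_2$, and the fact that $p_1+p_2$ takes only two values on $\{\theta_1,\theta_2,\theta_3\}$ puts us in Case~$1$ or Case~$2$; in Case~$1$ one verifies $\gamma_2 q_2=s_1-\tau$ for $\tau=k+\theta_2(\theta_1-\theta_2+\theta_3)$, and this $\tau$ must be the maximizer \eqref{tau} because the value $r(\theta_0)^2/\|r\|_{\G}^2=\overline{n_2}$ it yields is the maximum possible; since $\overline{k_3}=k_3$, this gives equality in \eqref{spet-3G12} for $j=1$ (and symmetrically for $j=2$ in Case~$2$).

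The step I expect to be the main obstacle is not the (routine, if lengthy) algebra but the interlacing argument that excludes the third a priori coincidence $(p_1+p_2)(\theta_2)=(p_1+p_2)(\theta_3)$: that configuration would produce the genuinely different monic polynomial $x^2-(\theta_2+\theta_3)x+\gamma_2+\theta_1(\theta_2-\theta_1+\theta_3)-\tau$, which is not of the form $s_j-\tau$ for $j\in\{1,2\}$, so the converse implication would break down were such a configuration feasible. Establishing that it is not --- equivalently, that $\theta_1$ always belongs to the pair of eigenvalues of $\G$ that merge into a single eigenvalue of $\Gamma_{1,2}$ --- is the delicate point, and is where one must exploit the relation between the spectrum of $\G$ and that of the distance-$3$ graph, whose adjacency matrix is $\A_3=\J-\I-\A_1-\A_2$.
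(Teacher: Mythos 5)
The paper omits the proof of this theorem entirely, stating only that it goes along the same lines as Theorem~\ref{teo(d=3)G2}; your reconstruction is exactly that intended argument, and your algebra (in particular $\alpha=-(\theta_1+\theta_3)$ in place of $\gamma_2-\theta_1-\theta_3$, reflecting the passage from $p_2$ to $p_1+p_2$) reproduces the stated polynomials \eqref{s1(x)G12}--\eqref{s2(x)G12} and the correct equality analysis via Lemma~\ref{lemaSPET} and Theorem~\ref{basic-theo}. The third coincidence $(p_1+p_2)(\theta_2)=(p_1+p_2)(\theta_3)$ that you flag as the delicate point is dismissed in the paper's own proof of Theorem~\ref{teo(d=3)G2} by the same unelaborated interlacing remark, so your treatment is no less complete than the source's.
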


\begin{example}
The Odd graph O(4) with 7 points, has $n={7\choose 3}=35=1+4+12+18$ vertices, diameter $d=3$, intersection array $\{4,3,3;1,1,2\}$, and spectrum
$4^1,2^{14},-1^{14},-3^{6}$. Then, the functions $\Phi(\tau_j)$ in \eqref{spet-3}
with $j=1,2$ have maximum values at $\tau_1=18/5$ and $\tau_2=-8$, respectively, and their values are $\Phi(18/5)=138/7$ and $\Phi(-8)=22$. Then, since both numbers are greater than $k_3=18$, its distance-$2$ graph $\G_2$ is not strongly regular.

On the other hand the function $\Phi(\tau_1)$ in \eqref{spet-3G12}
has maximum value at $\tau_1=4$, and $\Phi(4)=18=k_3$.
Hence, its distance-$1$-or-$2$ graph $\G_{1,2}$ (and, hence, also $\Gamma_3$)
is strongly regular with $p_1(x)+p_2(x)=x^2+x-4$, and spectrum $16^1,2^{20},-4^{14}$.
\end{example}

\subsection{The case of diameter four}
The following result deals with the case of $d=4$. As in the case of Theorem \ref{teo(d=3)G12}, we omit this proof as goes along the same lines of reasoning as in Theorem \ref{teo(d=3)G2}.

\begin{theo}
\label{teo(d=4)}
Let $\G$ be a regular graph with degree $k$, $n$ vertices, spectrum $\spec \G=\{\theta_0,\theta_1^{m_1},$ $\theta_2^{m_2},\theta_3^{m_3},\theta_4^{m_4}\}$, where $\theta_0(=k)>\theta_1>\theta_2>\theta_3>\theta_4$, such that $\theta_1+\theta_4=\theta_2+\theta_3$, and preintersection number $\gamma_2$. Let $\overline{n_2}=\frac{1}{n}\sum_{u\in V} n_2(u)$ be the average number $n_2(u)=|N_2(u)|$ of vertices  at distance at most $2$ from every vertex $u$ in $\G$.
Consider the polynomials
\begin{align*}
s_1(x) &=x^2-(\theta_2+\theta_3- \gamma_2)x+\theta_2\theta_3,\\
s_2(x) &=x^2-(\theta_2+\theta_3)x+\gamma_2-\theta_2\theta_3.
\end{align*}
Then,
\begin{equation}
\label{spet-4}
\overline{n_2}\ge \Phi(\tau_j)=\frac{n (s_j(\theta_0)-\tau_j)^2}{\sum_{i=0}^4m_i(s_j(\theta_i)
-\tau_j)^2},
\quad j=1,2,
\end{equation}
where
\begin{equation}
\label{tau}
\tau_j=\frac{s_j(\theta_0)\sum_{i=1}^4 m_i s_j(\theta_i)-\sum_{i=1}^4 m_i s_j(\theta_i)^2}
{s_j(\theta_0)(n-1)-\sum_{i=1}^4 m_i s_j(\theta_i)}, \quad j=1,2.
\end{equation}
Equality in \eqref{spet-4} holds with $j=1$ or $j=2$
if and only if $\G$ is a 2-partially distance-regular graph and its distance-$2$
or distance-$1$-or-$2$ graph, respectively, is strongly regular.
\end{theo}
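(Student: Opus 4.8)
The proof follows the same lines as that of Theorem~\ref{teo(d=3)G2}, using again the sum polynomial $q_2=p_0+p_1+p_2$ and the bound \eqref{basic-ineq} of Lemma~\ref{lemaSPET} with $i=2$, the matrix $p_2(\A)$ being replaced by $(p_1+p_2)(\A)$ in the $j=2$ (distance-$1$-or-$2$) case. I would first record the spectral picture. For $\Gamma_2$ (resp.\ $\Gamma_{1,2}$) to be strongly regular, $p_2(\A)$ (resp.\ $(p_1+p_2)(\A)$) must have exactly three distinct eigenvalues, that is, the quadratic $p_2$ (resp.\ $p_1+p_2$) takes only three values on $\spec\G=\{\theta_0,\dots,\theta_4\}$. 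Since a quadratic attains each value at most twice and the value at $\theta_0$ is the simple (Perron) one of a connected regular graph, the remaining four eigenvalues split into two pairs symmetric about the axis of the parabola; the hypothesis $\theta_1+\theta_4=\theta_2+\theta_3$ excludes the pairings $\{\theta_1,\theta_2\},\{\theta_3,\theta_4\}$ and $\{\theta_1,\theta_3\},\{\theta_2,\theta_4\}$ (each would force two of the $\theta_i$ to coincide), so the collapse can only be $\{\theta_1,\theta_4\}$, $\{\theta_2,\theta_3\}$. The polynomials $s_1$ (distance-$2$) and $s_2$ (distance-$1$-or-$2$) are the monic quadratics tailored so that $r_\tau(x):=s_j(x)-\tau$ is proportional to $q_2$ exactly when the appropriate ``collapse condition'' on the preintersection number $\alpha_1$ holds, and so that this proportionality then makes $p_2(\A)$ (resp.\ $(p_1+p_2)(\A)$) realise precisely the above pattern.

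The heart of the argument is to apply \eqref{basic-ineq} with $i=2$ to the family $r_\tau=s_j-\tau$ (a nonzero polynomial of degree $2$ for every $\tau$), which gives $\Phi(\tau)=r_\tau(\theta_0)^2/\|r_\tau\|_\G^2\le\overline{n_2}$. Maximising the rational function $\Phi$ over $\tau$ — its maximum is attained at the value $\tau_j$ of \eqref{tau}, found by a direct differentiation after which one isolates the $i=0$ term from $\sum_{i=0}^4$ and uses $\sum_{i=0}^4 m_i=n$ — yields the inequality \eqref{spet-4}.

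For the equality case I would invoke the equality clause of Lemma~\ref{lemaSPET}: equality in \eqref{spet-4} for a given $j$ holds iff some $r_\tau$ is a multiple of $q_2$ (necessarily the $\gamma_2$-multiple, since $r_\tau$ is monic and $q_2$ has leading coefficient $1/\gamma_2$) \emph{and} $q_2(\A)=\A_0+\A_1+\A_2$. The latter is exactly the statement that $\G$ is $2$-partially distance-regular, since $p_0(\A)=\I$ and $p_1(\A)=\A$ hold always, so it reduces to $p_2(\A)=\A_2$. Comparing the coefficients of $s_j$ with those of $\gamma_2q_2=x^2+(\gamma_2-\alpha_1)x+(\gamma_2-k)$, the former condition becomes $\alpha_1=\theta_2+\theta_3$ when $j=1$ and $\alpha_1=\gamma_2+\theta_2+\theta_3$ when $j=2$ (the constant term then forces $\tau=\tau_j$, as can also be read off \eqref{tau}). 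When $\G$ is $2$-partially distance-regular, $\alpha_1=a_1$ and $\gamma_2=c_2$, so these are the conditions $a_1=\theta_2+\theta_3\,(=\theta_1+\theta_4)$ and $a_1-c_2=\theta_2+\theta_3\,(=\theta_1+\theta_4)$ of Proposition~\ref{propo-drg-G12}(i),(ii); each makes $\A_2=p_2(\A)$, resp.\ $\A_{1,2}=(p_1+p_2)(\A)$, have exactly three distinct eigenvalues, hence, being the $0$-$1$ adjacency matrix of a connected regular graph, strongly regular. The converse is the same chain reversed: strong regularity of $\Gamma_2$ (resp.\ $\Gamma_{1,2}$) together with $2$-partial distance-regularity gives the three-eigenvalue condition, the collapse-pattern discussion forces the corresponding relation on $\alpha_1$, so $s_j-\tau_j=\gamma_2q_2$, and Lemma~\ref{lemaSPET} yields equality in \eqref{spet-4}.

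The step I expect to be the main obstacle is the bookkeeping in the equality analysis: one has to verify carefully that the only admissible eigenvalue collapse is $\{\theta_1,\theta_4\},\{\theta_2,\theta_3\}$ (in particular that the value at $\theta_0$ is not repeated, which in the distance-$2$ case amounts to $\Gamma_2$ being connected — a degenerate ``disconnected'' case must be handled or excluded from the meaning of ``strongly regular''), and that the formal identity ``$s_j-\gamma_2q_2$ is constant'' is, given $2$-partial distance-regularity, genuinely equivalent to the asserted strong regularity. The remaining parts — the differentiation giving \eqref{tau}, the coefficient comparisons, and the observation that $\Gamma_{1,2}$ is automatically connected since it contains $\G$ — are routine, exactly as in the proof of Theorem~\ref{teo(d=3)G2}.
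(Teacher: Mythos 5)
Your proposal is correct and follows essentially the same route as the paper, which in fact omits this proof and simply refers to the argument of Theorem~\ref{teo(d=3)G2}: apply Lemma~\ref{lemaSPET} with $i=2$ to $r_\tau=s_j-\tau$, maximize $\Phi(\tau)$ to get \eqref{tau}, and use the equality clause together with the coefficient comparison $s_j-\tau_j=\gamma_2 q_2$ and the forced eigenvalue pairing $\{\theta_1,\theta_4\},\{\theta_2,\theta_3\}$. Your identification of the collapse conditions $\alpha_1=\theta_2+\theta_3$ (resp.\ $\alpha_1=\gamma_2+\theta_2+\theta_3$) and of the $2$-partial distance-regularity coming from $q_2(\A)=\A_0+\A_1+\A_2$ matches the intended argument, and the connectivity caveat you flag is a genuine (unaddressed) subtlety of the paper as well.
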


\begin{example}
\label{Hamming}
The Hamming graph $H(4,3)$, with $n=3^4=81$ vertices and diameter $d=4$, has intersection array $\{8,6,4,2;1,2,3,4\}$, so that $k_4=16$, and spectrum
$8^1,5^{8},2^{24},-1^{32},-4^{16}$. Then, the function $\Phi(\tau_j)$ in \eqref{spet-4}
with $j=1$
has a maximum at $\tau_1=4$, and its value is $\Phi(4)=33=s_{2}$. Then, $P_{14}=P_{44}$ and $P_{24}=P_{34}$. Indeed, its distance-$2$ polynomial is $p_2(x)=\frac{1}{2}(x^2-x-8)$ with values $p_4(8)=24$, $p_4(5)=6$, $p_4(2)=-3$, $p_4(-1)=-3$, and $p_4(-4)=6$. Hence, the distance-$2$ graph $\G_2$ is strongly regular with spectrum $24^1,6^{24},-3^{56}$.
\end{example}

\begin{example}
The Odd graph O(5) with 9 points, has $n={9\choose 4}=126=1+5+20+40+60$ vertices, diameter $d=4$, intersection array $\{5,4,4,3;1,1,2,2\}$, and spectrum
$5^1,3^{27},1^{42},-2^{48},-4^{8}$. Then, the function $\Phi(\tau_j)$ in \eqref{spet-4}
with $j=2$
has a maximum at $\tau_2=3$, and its value is $\Phi(4)=26=s_{2}$. Then, its distance-$1$-or-$2$ polynomial is $p_{1,2}(x)=p_1(x)+p_2(x)=x^2+x-5$ with values $p_{1,2}(5)=25$, $p_{1,2}(3)=7$, $p_{1,2}(1)=-3$, $p_{1,2}(-2)=-3$, and $p_{1,2}(-4)=7$. Hence, the distance-$1$-or-$2$ graph $\G_{1,2}$ is strongly regular with spectrum $25^1,7^{35},-3^{90}$.
\end{example}

%
%
%




\begin{thebibliography}{99}

\bibitem{adf16}
A. Abiad, E. R. van Dam, and M. A. Fiol,
Some spectral and quasi-spectral characterizations of distance-regular graphs,
{\em J. Combin. Theory, Ser. A} {\bf 143} (2016) 1--18.

\bibitem{b84}
A. E. Brouwer,
Distance regular graphs of diameter 3 and strongly regular graphs,
{\em Discrete Math.} {\bf 49} (1984) 101--103.

\bibitem{bcn89}
A. E. Brouwer, A. M. Cohen, and A. Neumaier,
\emph{Distance-Regular Graphs},
Springer-Verlag, Berlin-New York, 1989.

\bibitem{bf14}
A. E. Brouwer and M. A. Fiol,
Distance-regular graphs where the distance-$d$ graph has fewer distinct eigenvalues,
{\em Linear Algebra Appl.} {\bf 480} (2015) 115--126.

\bibitem{bh12}
A. E. Brouwer and W. H. Haemers,
\emph{Spectra of Graphs},
Springer, 2012; available online at \url{http://homepages.cwi.nl/~aeb/math/ipm/}.

\bibitem{cffg09}
M. C\'amara, J. F\`abrega, M. A. Fiol, and E. Garriga,
Some families of orthogonal polynomials of a discrete variable and their applications to graphs and codes,
{\em Electron. J. Combin.} {\bf 16(1)} (2009) \#R83.

\bibitem{ddfgg11}
C. Dalf\'o, E. R. van Dam, M. A. Fiol, E. Garriga, and B. L. Gorissen,
On almost distance-regular graphs,
{\em J. Combin. Theory, Ser. A} {\bf 118} (2011) 1094--1113.

\bibitem{vd08}
E. R. van Dam,
The spectral excess theorem for distance-regular graphs: a global (over)view,
{\em Electron. J. Combin.} {\bf 15(1)} (2008) \#R129.

\bibitem{vdkt16}
E. R. van Dam, J. H. Koolen, and H. Tanaka,
Distance-regular graphs,
{\it Electron. J. Combin.} (2016) \#DS22.

\bibitem{dff16}
V. Diego, J. F\`abrega, and M. A. Fiol,
Equivalent characterizations of the spectra of graphs and applications to measures of distance-regularity,
submitted (2016).

\bibitem{f97}
M. A. Fiol,
An eigenvalue characterization of antipodal distance-regular graphs,
{\em Electron. J. Combin.} {\bf 4} (1997) \#R30.

\bibitem{f00}
M. A. Fiol,
A quasi-spectral characterization of strongly distance-regular graphs,
{\em Electron. J. Combin.} {\bf 7} (2000) \#R51.

\bibitem{f01}
M. A. Fiol,
Some spectral characterization of strongly distance-regular graphs,
{\em Combin. Probab. Comput.} {\bf 10} (2001), no. 2, 127--135.

\bibitem{f02}
M. A. Fiol,
Algebraic characterizations of distance-regular graphs,
\emph{Discrete Math.} {\bf 246} (2002) 111--129.

\bibitem{f16}
M. A. Fiol,
The spectral excess theorem for distance-regular graphs having distance-$d$ graph with fewer distinct eigenvalues,
{\em J. Algebraic Combin.} {\bf 43} (2016), no. 4, 827--836.

\bibitem{fgg10}
M. A. Fiol, S. Gago, and E. Garriga,
A simple proof of the spectral excess theorem for distance-regular graphs,
{\em Linear Algebra Appl.} {\bf 432} (2010) 2418--2422.

\bibitem{fg97}
M. A. Fiol and E. Garriga,
From local adjacency polynomials to locally pseudo-distance-regular graphs,
\emph{J. Combin. Theory Ser. B} {\bf 71} (1997) 162--183.

\bibitem{fgy1b}
M. A. Fiol, E. Garriga, and J. L. A. Yebra,
Locally pseudo-distance-regular graphs,
{\it J.  Combin. Theory Ser. B} {\bf 68} (1996) 179--205.


\bibitem{hof63}
A. J. Hoffman,
On the polynomial of a graph,
{\it Amer. Math. Monthly} {\bf 70} (1963) 30--36.


\bibitem{s75}
G. Szeg\"o,
{\em Orthogonal Polynomials},
4th edition, American Mathematical Society, Providence, R.I., 1975.

\end{thebibliography}
\end{document}